\newtheorem{theorem}{Theorem}
\newtheorem{conjecture}{\bf Conjecture}
\theoremstyle{remark}
\newtheorem*{remark}{Remark}
\numberwithin{theorem}{section} \numberwithin{equation}{section}
\newcommand{\Q}{\mathbb{Q}}
\newcommand{\Z}{\mathbb{Z}}
\begin{document}
\title[Small values of Lerch sums]{Small Values of Coefficients of a Half Lerch Sum }

\author{Xinhua Xiong} 
\address{Research Institute for Symbolic Computation\\Johannes Kepler University
\\ Altenberger Straße 69\\ A-4040 Linz,  Austria}
\email{xxiong@risc.uni-linz.ac.at}

\thanks{ The  author was  supported  by the Austria Science Foundation (FWF) grant SFB F50-06 (Special Research Programm `` Algorithmic and Enumerative Combinatorics'').}
\subjclass[2000] {11P82, 11N37}
\keywords{{$q$-hypergeometric series,}  {coefficients of $q$-series,}  {lacunary,}  { bound}}
\maketitle

\begin{abstract}
Andrews, Dyson and Hickerson proved many interesting properties of coefficients for a Ramanujan's $q$-hypergeometric series by relating it to real quadratic field $\Q(\sqrt{6})$ and using the arithmetic of $\Q(\sqrt{6})$,  hence solved a conjecture of Andrews on the distributions of its Fourier coefficients. Motivated by Andrews's conjecture, we discuss an interesting $q$-hypergeometric series which comes from a Lerch sum and rank and crank moments for partitions and overpartitions. We give Andrews-like conjectures for its coefficients. We obtain partial results on the distributions of small values of its coefficients toward these conjectures.
\end{abstract}

\section{introduction and statement of results}
The function
\begin{equation} \label{sigma}
\begin{aligned}
\sigma(q)&=\sum_{n=0}^{\infty} S(n)q^{n}=1+ \sum_{n=1}^{\infty}\frac{q^{\frac{n(n+1)}{2}} }{(-q)_n} \\
&=1+q-q^2+2q^3+ \dots +4q^{45}+ \dots+6q^{1609}+\dots
\end{aligned}
\end{equation}
first appeared in Ramanujan's lost note book \cite{Ra}.
It was noted that $S(n)$ can be interpreted  as the number of partitions of $n$ into distinct parts with even rank minus the number with odd rank. Recall that Dyson's rank of a partition is defined as its largest part minus its number of parts. Although it has a partition interpretation, it behaves completely different from most of partition functions.  Andrews \cite{andrewsMonthly} noted that most $q$-series with partition interpretations either have coefficients which tend to infinity in absolute value or are bounded. For instance, the usual partition function $p(n)$, whose generating function can be represented as a hypergeometric series, grows very fast. %, even it is very difficult to determine the parity of its values (see \cite{Ahl}, \cite{ BYZ}, \cite{Ono}). 

In his paper \cite{andrewsMonthly}, Georg Andrews studied $S(n)$ and conjectured that
\begin{enumerate} 
\item $S(n)$ is zero infinitely often,
\item  $\limsup|S(n)|=+\infty$.
\end{enumerate}
Andrews's conjecture was soon proved by himself, Dyson and Hickerson \cite{ADH}. They related $\sigma(q)$ to the arithmetic of quadratic field $\Q(\sqrt{6})$. They obtained an exact formula for $S(n)$, which implies that $S(n)$ is lacunary, i.e. its coefficients are almost always zero, and attains every positive integers infinitely many times. 

Besides the strange behaviors of its coefficients, the function $\sigma(q)$ is found related to automorphic forms. Cohen \cite{Co} used it to construct a classical Maass wave forms.  It has a representation as Hecke-type double sums
$$
\sigma(q)= \sum_{\substack{ n \geq 0 \\ |j| \leq n} } (-1)^{n+j}\, q^{\frac{n(3n+1)}{2} -j^2}
\left(1-q^{2n+1} \right).
$$
Zwegers \cite{zwegers} used Hecke-type double sums to describe mock theta functions, so Bringmann and Kane \cite{Br-Ka1} viewed it  as a ``false mock theta function". Along this line, more examples resembling $\sigma(q)$ are found and relations to quadratic fields  and automorphic forms are built, see \cite{BK2}, \cite{BK3}, \cite{CFLZ}, \cite{L}, \cite{Lo2} and  \cite{Lo1} etc..

In this paper, we will not go to the directions to relate more $q$-hypergeometric series to quadratic fields or automorphic forms. We will turn to the properties (1) and (2) considered by Andrews but for another strikingly simple-looking $q$-hypergeometric series which is defined by 
\begin{equation*} \label{hqdefn}
 h(q) :=\sum_{n=1}^{\infty} h(n)= \sum_{n=1}^{\infty} \frac{(-1)^{n+1} q^{n(n+1)/2}}{1-q^n}. 
 \end{equation*}
 $h(q)$ looks like $\sigma(q)$, and the distributions of its coefficients $h(n)$ appears to be similar to that of $S(n)$ in some faces.  But in other faces, it completely does not.  We will prove that $h(q)$ has property (1), but it is not lacunary. We conjecture it has property (2).
We note that  $h(q)$ is related to mock theta functions. Because the completed sum 
\begin{equation*} \label{hqdefn}
 \sum_{n=-\infty}^{-1} \frac{(-1)^{n+1} q^{n(n+1)/2}}{1-q^n} + \sum_{n=1}^{\infty} \frac{(-1)^{n+1} q^{n(n+1)/2}}{1-q^n}
  \end{equation*}
is almost the specialization of Lerch sum 
\begin{equation*}
\sum_{n\in\Z} \frac{(-1)^n e^{\pi i(n^2+n)\tau+2\pi inv}}{1-e^{2\pi in\tau + 2\pi iu}} 
\end{equation*} by taking $u=v=0$ and letting $ q=e^{2\pi i \tau}. $
This is why we call $h(q)$ a half Lerch sum. Lerch sums are building blocks for mock theta functions by the works of Zwegers \cite{zwegers}.  Secondly, $h(q)$ has a similar Hecke-type double sums form
$$
h(q)= \sum_{\substack{ n \geq 1 \\  1\leq j \leq n-1} } q^{n^{2} +nj} + \sum_{\substack{ n \geq 1 \\  0\leq j \leq n} } q^{n^{2} +nj}.
$$
But this double sum is different from usual Hecke double sums, here $j$ appears linearly in the power of $q$. 
Moreover, $h(q)$ is related to the first positive crank moments for partitions and overprtitions.  By \cite{ACK}, the generating function of the first positive crank moments for partitions can be represented as 
\begin{equation*} 
 \frac{1}{(q)_{\infty}} \sum_{n=1}^{\infty} \frac{(-1)^{n+1} q^{n(n+1)/2}}{1-q^n}.
 \end{equation*}
By \cite{LS}, the generating function of the first positive crank moments for overpartitions can be represented as 
\begin{equation*} 
 \frac{(-q)_{\infty}}{(q)_{\infty}} \sum_{n=1}^{\infty} \frac{(-1)^{n+1} q^{n(n+1)/2}}{1-q^n}.
 \end{equation*}
 We believe that $h(q)$ should be an important object in the study of $q$-hypergeometric series, automorphic forms and partitions theory.

Now we state our main results, we think they are strange and interesting.

\begin{theorem}\label{theorem1}
\begin{enumerate}
\item
For  $X>0$,
\begin{align}\label{inequality1}
\frac{X}{2} - \left(\frac{3}{2}+\sqrt{2}\right )\sqrt{X} -1-\frac{3\sqrt{2}}{2} <   \# \{ 1\leq n \leq X| h(n) =0   \} < 
\frac{3X}{4}-\frac{\sqrt{X}}{2\sqrt{2}}+1.
\end{align}
\item
For $n=p^{d}$, we have $h(n)=0$, where $p$ is an odd prime and $d$ is an odd positive integer.
\item
For $n=p_{1}p_{2}\dots p_{k}$, we have $h(n)=0$, where $p_{i}$ are different odd primes,  and $p_{k}> 2p_{1}p_{2}\dots p_{k-1}$, $k\geq 2$.
\end{enumerate}
\end{theorem}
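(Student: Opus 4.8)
I would build the whole proof on the Hecke-type double sum recorded in the introduction. Writing $n^{2}+nj=n(n+j)$ and putting $m=n+j$, the first sum counts pairs $ab=N$ with $a<b<2a$ and the second counts pairs $ab=N$ with $a\leq b\leq 2a$, so that
\[
h(N)=2A(N)+\varepsilon(N),\qquad A(N):=\#\{d\mid N:\ \sqrt{N/2}<d<\sqrt N\},
\]
where $\varepsilon(N)=1$ when $N$ or $N/2$ is a perfect square and $\varepsilon(N)=0$ otherwise. In particular $h(N)\geq 0$, and
\[
h(N)=0\iff N\text{ has no divisor in }[\sqrt{N/2},\,\sqrt N]\iff N\neq ab\text{ with }a\leq b\leq 2a .
\]
The first step is to establish this reformulation carefully; the only subtlety is the boundary cases $d=\sqrt N$ and $d=\sqrt{N/2}$, which produce the term $\varepsilon(N)$.

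Parts (2) and (3) then follow by inspecting the divisors. If $N=p^{d}$ with $p$ an odd prime and $d$ odd, the divisors are $p^{0},\dots,p^{d}$, and since $d$ is odd $\sqrt N=p^{d/2}$ is not a power of $p$; a divisor $p^{k}\leq\sqrt N$ has $k\leq(d-1)/2$, hence $p^{k}\leq p^{(d-1)/2}=\sqrt N/\sqrt p\leq \sqrt N/\sqrt3<\sqrt{N/2}$, while $p^{k}>\sqrt N$ forces $k\geq(d+1)/2$ and $p^{k}\geq\sqrt p\,\sqrt N>\sqrt N$, so no divisor lies in $[\sqrt{N/2},\sqrt N]$ and $h(N)=0$. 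If $N=p_{1}\cdots p_{k}$ with $p_{k}>2Q$, $Q:=p_{1}\cdots p_{k-1}$, then every divisor of $N$ is either a divisor $e$ of $Q$, whence $e\leq Q<\sqrt{Qp_{k}/2}=\sqrt{N/2}$, or is $ep_{k}$ with $e\mid Q$, whence $ep_{k}\geq p_{k}>\sqrt{Qp_{k}}=\sqrt N$; again no divisor lies in the interval, so $h(N)=0$. Both are short once the reformulation is available.

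For part (1) set $S_{a}=\{n\leq X:\ n=am,\ a\leq m\leq 2a\}$, so $\{n\leq X:\ h(n)\neq 0\}=\bigcup_{a\geq 1}S_{a}$. Two elementary facts drive the estimates: $S_{a}=\emptyset$ once $a^{2}>X$, and $|S_{a}|\leq a+1$ always, with equality exactly when $2a^{2}\leq X$. The lower bound for $\#\{n\leq X:h(n)=0\}$ follows from the crude union bound $\#\{n\leq X:h(n)\neq 0\}\leq\sum_{a=1}^{\lfloor\sqrt X\rfloor}(a+1)\leq X/2+\tfrac32\sqrt X$, giving $\#\{n\leq X:h(n)=0\}\geq X/2-O(\sqrt X)$; tracking floor functions honestly produces the stated constants $(\tfrac32+\sqrt2)\sqrt X+1+\tfrac{3\sqrt2}{2}$. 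The upper bound is the substantive direction: one needs the reverse inequality $\#\bigcup_{a}S_{a}\geq \tfrac X4+\tfrac{\sqrt X}{2\sqrt2}-1$, i.e.\ a good \emph{lower} bound for the union. The plan is a Bonferroni estimate
\[
\#\bigcup_{a\leq\sqrt{X/2}}S_{a}\ \geq\ \sum_{a\leq\sqrt{X/2}}|S_{a}|\ -\ \sum_{a<b\leq\sqrt{X/2}}|S_{a}\cap S_{b}| ,
\]
in which $\sum_{a\leq\sqrt{X/2}}|S_{a}|=\tfrac12\lfloor\sqrt{X/2}\rfloor(\lfloor\sqrt{X/2}\rfloor+3)\approx \tfrac X4+\tfrac{3\sqrt X}{2\sqrt2}$. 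The decisive point is that for $a<b$ the set $S_{a}\cap S_{b}$ is nonempty only when $b\leq a\sqrt2$ (since $S_{a}\subseteq[a^{2},2a^{2}]$) and $\gcd(a,b)>1$ (if $\gcd(a,b)=1$ a common element would be a multiple of $ab$ inside $(a^{2},2a^{2})$, impossible), and then $|S_{a}\cap S_{b}|$ is the number of integers in $[\,g\beta/\alpha,\ 2g\alpha/\beta\,]$, where $g=\gcd(a,b)$, $a=g\alpha$, $b=g\beta$, $\gcd(\alpha,\beta)=1$, $\alpha<\beta\leq\alpha\sqrt2$.

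I expect the genuine obstacle to be exactly this last estimate: showing that $\sum_{a<b}|S_{a}\cap S_{b}|$ stays far enough below $\tfrac X4$, with an error of the correct order in $\sqrt X$, so that the Bonferroni bound still yields the clean constant $1/4$ and the explicit $\sqrt X$-term. This forces one to sum the count above over the admissible ratios $\beta/\alpha\in(1,\sqrt2]$ and over $g$ with real precision, rather than the wasteful bound $|S_{a}\cap S_{b}|\ll\gcd(a,b)$. Everything else — the divisor reformulation, parts (2) and (3), and the lower half of (1) — is routine by comparison.
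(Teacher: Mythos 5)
Your divisor reformulation $h(N)=2\,\#\{d\mid N:\ \sqrt{N/2}<d<\sqrt{N}\}+\varepsilon(N)$ is correct and is exactly the paper's identity (2.2) under the substitution $d=x$, $N/d=x+y$; with it, your proofs of parts (2) and (3) and of the left-hand inequality of (1.2) are sound and agree in substance with the paper's (which phrases the same arguments through the equation $n=x(x+y)$, $1\le y\le x-1$); your union bound $\sum_{a\le\sqrt{X}}(a+1)\le\frac{X}{2}+\frac{3\sqrt{X}}{2}$ even yields a slightly better constant than the one stated.

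The genuine gap is exactly where you flagged it, and it cannot be closed. For the right-hand inequality of (1.2) you need a \emph{lower} bound of size $\frac{X}{4}+\frac{\sqrt{X}}{2\sqrt{2}}-1$ for $\#\bigcup_{a}S_{a}$, but this union is in fact $o(X)$: every $n$ with $h(n)\neq0$ is a product $ab$ with $a\le b\le 2a\le 2\sqrt{X}$, hence a distinct entry of the $2\sqrt{X}\times2\sqrt{X}$ multiplication table, and by Erd\H{o}s's theorem (made precise by Ford) the number of such entries is $o(X)$, of order $X/\bigl((\log X)^{\delta}(\log\log X)^{3/2}\bigr)$ with $\delta\approx0.086$. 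Consequently $\#\{1\le n\le X: h(n)=0\}=X-o(X)$, which exceeds $\frac{3X}{4}-\frac{\sqrt{X}}{2\sqrt{2}}+1$ for all large $X$; the decay is so slow that the table up to $X=10^{5}$ does not reveal this. Your Bonferroni plan therefore has no chance: writing $r(n)=\#\{a: n\in S_{a}\}$ one has $\sum_{a<b}|S_{a}\cap S_{b}|=\sum_{n}\binom{r(n)}{2}\ge\sum_{n}r(n)-\#\{n:r(n)\ge1\}=\frac{X}{4}-o(X)$, so the second Bonferroni term swallows essentially the entire main term, whereas the available slack over the target is only $O(\sqrt{X})$. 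For comparison, the paper's own proof of this half of part (1) commits precisely the step you declined to take for free: it asserts that $\#\{n\le X: h(n)\neq0\}$ is \emph{at least} the sum $2+3+\cdots+(m+1)$ of the row lengths, i.e.\ it treats the rows as pairwise disjoint, which they are not (e.g.\ $72=6\cdot12=8\cdot9$ lies in two rows). So your instinct about where the difficulty sits was correct, but the resolution is that this inequality is false for large $X$ rather than that a sharper overlap estimate is needed.
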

\begin{remark}
Unlike $\sigma(q)$, which is lacunary, $h(q)$ is not lacunary.
\end{remark}

%\remark We conjecture $h(q)$ is not lacunary.

\begin{theorem}\label{theorem2}
\begin{enumerate}

\item
For sufficient large $X$,
\begin{align*}
  \frac{\sqrt{X}}{\log X}   \ll    \# \{ 1\leq n \leq X| h(n) =1   \}<
\left(1+\frac{1}{\sqrt{2}}\right)\sqrt{X}.
\end{align*}
\item
for $n=2^{k}$, we have $h(n)=1$, where $k \geq 1$ is a positive integer. 
\item
For $n=p^{2k}$, we have $h(n)=1$, where $p$ is an odd prime and is a positive integer.
\item
For $n=p_{1}^{2}p_{2}^{2}$ with $p_{1}$ and $p_{2}$ odd primes and $p_{2} > \sqrt{p_{1}}$, we have $h(n)=1$.\end{enumerate}
\end{theorem}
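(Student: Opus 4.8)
\textbf{Proof strategy for Theorem~\ref{theorem2}.}
The plan is to reduce all four parts to an elementary count of divisors in a short interval, starting from the Hecke-type double sum for $h(q)$ quoted in the introduction. Adding the two sums there, the coefficient of $q^{N}$ is
\begin{equation*}
h(N)=\#\{d\mid N:\sqrt{N/2}<d<\sqrt N\}+\#\{d\mid N:\sqrt{N/2}\le d\le\sqrt N\}.
\end{equation*}
A divisor of $N$ equals $\sqrt N$ only when $N$ is a perfect square, and equals $\sqrt{N/2}$ only when $N$ is twice a perfect square, and these two events are mutually exclusive (otherwise $\sqrt2\in\Q$). Hence the second count exceeds the first by $\epsilon(N)\in\{0,1\}$, where $\epsilon(N)=1$ precisely when $N$ is a square or twice a square. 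Therefore
\begin{equation*}
h(N)=2A(N)+\epsilon(N),\qquad A(N):=\#\{d\mid N:\sqrt{N/2}<d<\sqrt N\}.
\end{equation*}
In particular $h(N)\equiv\epsilon(N)\pmod2$, and $h(N)=1$ if and only if $\epsilon(N)=1$ and no divisor $d$ of $N$ satisfies $N/2<d^{2}<N$. Establishing this identity is the first step; everything else is short.

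For part (1) the upper bound is then immediate: $h(N)=1$ forces $N$ to be a square or twice a square, so
\begin{equation*}
\#\{1\le n\le X:h(n)=1\}\le\lfloor\sqrt X\rfloor+\lfloor\sqrt{X/2}\rfloor<\Bigl(1+\tfrac1{\sqrt2}\Bigr)\sqrt X,
\end{equation*}
the strict inequality holding because $\sqrt X$ and $\sqrt{X/2}$ are never both integers. For the lower bound one appeals to part (3): for every odd prime $p\le\sqrt X$ one has $h(p^{2})=1$, so by the prime number theorem the count is at least $\pi(\sqrt X)-1\gg\sqrt X/\log X$.

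Parts (2), (3), (4) now share the pattern ``$N$ is a square or twice a square, so $\epsilon(N)=1$; it remains to show $A(N)=0$''. For $N=2^{k}$ the divisors are the powers $2^{j}$, $0\le j\le k$, and $2^{k-1}<2^{2j}<2^{k}$ would put the integer $j$ in the length-$\tfrac12$ interval $\bigl(\tfrac{k-1}2,\tfrac k2\bigr)$, which contains no integer. For $N=p^{2k}$ the divisors are the $p^{j}$, $0\le j\le 2k$, and $p^{2k}/2<p^{2j}<p^{2k}$ would put $j$ in $\bigl(k-\tfrac{\log2}{2\log p},\,k\bigr)$, an interval of length $\tfrac{\log2}{2\log p}\le\tfrac{\log2}{2\log3}<1$ lying inside $(k-1,k)$, hence again empty of integers. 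For $N=p_{1}^{2}p_{2}^{2}$ (if $p_{1}=p_{2}$ this is a case of part (3), so assume $p_{1}<p_{2}$), the divisors of $N$ less than $\sqrt N=p_{1}p_{2}$ are exactly $1,\,p_{1},\,p_{1}^{2},\,p_{2}$; one checks at once that $1$, $p_{1}$ and $p_{2}$ are all $\le p_{1}p_{2}/\sqrt2$, so the only divisor that could lie in the window $(\sqrt{N/2},\sqrt N)$ is $p_{1}^{2}$, and $p_{1}^{2}<\sqrt{N/2}=p_{1}p_{2}/\sqrt2$ holds exactly when $p_{2}>\sqrt2\,p_{1}$. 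Supplying this last inequality is the single point where the size hypothesis on $p_{1}$ and $p_{2}$ is used; granting it, $A(N)=0$ and $h(N)=1$.

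The substantive ingredient is the reformulation $h(N)=2A(N)+\epsilon(N)$, and it is already in hand from the double sum in the introduction; once it is available the only step requiring care is the divisor bookkeeping in part (4)---correctly listing the nine divisors of $p_{1}^{2}p_{2}^{2}$ and isolating the one comparison that the hypothesis on the two primes must control. The remaining parts reduce to one-line interval estimates.
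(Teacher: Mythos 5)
Your proof is correct and follows essentially the same route as the paper: your identity $h(N)=2A(N)+\epsilon(N)$ is exactly the paper's $h(n)=2\#A_n+\#B_n+\#C_n$, with the divisor window $\bigl(\sqrt{N/2},\sqrt{N}\bigr)$ simply rephrasing the parametrization $n=x(x+y)$, $1\le y\le x-1$, and parts (1)--(4) then proceed by the same counting and case analysis. One remark: in part (4) you prove the claim under the hypothesis $p_2>\sqrt{2}\,p_1$, which is indeed what the paper's own argument uses; the condition $p_2>\sqrt{p_1}$ printed in the theorem statement is evidently a typo, and you were right to correct it.
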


%%%%%%%%%%%%%%
\begin{theorem}\label{theorem3}
\begin{enumerate}

\item
For sufficient large $X$,
\begin{align}\label{1-theorem3}
\frac{\sqrt{X}}{\log X} \ll \# \{ 1\leq n \leq X| h(n) =2   \} <
\frac{X}{2}+\left(\frac{3}{2}+\sqrt{2}\right)\sqrt{X} +1+\frac{3\sqrt{2}}{2}.
\end{align}
\item
For $n=p_{1}p_{2}$ with $p_{1}$ and $p_{2}$ odd prime and $p_{1} < p_{2} < 2p_{1} $, we have $h(n)=2$.
\item
For $n=p\times (p+1)$ or $(p-1)\times p$, with $p$ an odd prime, we have $h(n)=2$.
\end{enumerate}
\end{theorem}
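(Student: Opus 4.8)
\textbf{Proof plan for Theorem \ref{theorem3}.}

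The plan is to reduce all three parts to the divisor-counting description of $h(n)$ that also underlies Theorems \ref{theorem1} and \ref{theorem2}. Starting from the Hecke-type double sum for $h(q)$ and sorting each monomial $q^{n^{2}+nj}=q^{n(n+j)}$ according to the factorization $n\cdot(n+j)$ of its exponent, one obtains
$$h(N)=\#\{d\mid N:\sqrt{N/2}\le d<\sqrt{2N}\}=2A(N)+B(N)+C(N),$$
where $A(N)=\#\{d\mid N:\sqrt{N/2}<d<\sqrt N\}$, where $B(N)=1$ if $N$ is a perfect square and $0$ otherwise, and where $C(N)=1$ if $N/2$ is a perfect square and $0$ otherwise. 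In particular $h(N)\ge 0$, and since $N$ and $N/2$ cannot both be squares, $h(N)=2$ holds if and only if $A(N)=1$ and neither $N$ nor $N/2$ is a perfect square. (If this lemma is not yet available I would establish it first; it is the same bookkeeping that proves the earlier theorems.)

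Granting this, parts (2) and (3) become finite verifications of the three conditions $A(N)=1$, $N\neq\square$, $N/2\neq\square$. For $N=p_{1}p_{2}$ with $p_{1}<p_{2}<2p_{1}$: $N$ is squarefree and $N/2\notin\Z$, so neither is a square; and among the divisors $1,p_{1},p_{2},p_{1}p_{2}$ the only one in $(\sqrt{N/2},\sqrt N)$ is $p_{1}$, since $\sqrt{N/2}<p_{1}<\sqrt N$ is exactly $p_{2}<2p_{1}$ together with $p_{1}<p_{2}$; hence $A(N)=1$ and $h(N)=2$. For $N=p(p+1)$ (and symmetrically $N=(p-1)p$): as consecutive integers are coprime and $p$ is prime, $p$ occurs to the first power both in $N$ and in $N/2=p\cdot\frac{p+1}{2}$ (resp.\ $\frac{p-1}{2}\cdot p$), so neither $N$ nor $N/2$ is a square; and the unique divisor of $N$ in $(\sqrt{N/2},\sqrt N)$ is $p$ (resp.\ $p-1$), because every divisor built from a proper divisor of $p+1$ (resp.\ $p-1$) is at most $(p+1)/2<\sqrt{N/2}$ or equals $p+1\ge\sqrt N$, while every other multiple of $p$ is $\ge 2p\ge\sqrt{2N}$. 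Thus $A(N)=1$ and $h(N)=2$.

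For the lower bound in (1) I would invoke part (3): the integers $N=p(p+1)$ with $p$ an odd prime are pairwise distinct, all satisfy $h(N)=2$, and $N\le X$ is equivalent to $p\le(\sqrt{4X+1}-1)/2$; Chebyshev's estimate (or the prime number theorem) then supplies $\gg\sqrt X/\log X$ such $N$. For the upper bound in (1) the crucial fact is again that $h(n)\ge 0$: the sets $\{n\le X:h(n)=2\}$ and $\{n\le X:h(n)=0\}$ are disjoint subsets of $\{1,\dots,\lfloor X\rfloor\}$, so
$$\#\{n\le X:h(n)=2\}\le X-\#\{n\le X:h(n)=0\},$$
and inserting the lower bound for $\#\{n\le X:h(n)=0\}$ from Theorem \ref{theorem1}(1) gives exactly $\tfrac{X}{2}+\bigl(\tfrac{3}{2}+\sqrt2\bigr)\sqrt X+1+\tfrac{3\sqrt2}{2}$.

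I do not anticipate a genuine obstacle. Once the divisor formula for $h(n)$ is in place, parts (2) and (3) are elementary divisor checks, the lower bound in (1) is just the prime number theorem, and the upper bound in (1) is a one-line deduction from Theorem \ref{theorem1}. The only points demanding care are confirming that no unexpected divisor slips into the half-open interval in the families of (2) and (3), and that the relevant $n$ and $n/2$ are genuinely non-squares; both are forced by the transparent shape of the prime factorizations involved.
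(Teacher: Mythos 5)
Your proposal is correct and follows essentially the same route as the paper: the divisor-interval description $A(N)=\#\{d\mid N:\sqrt{N/2}<d<\sqrt N\}$ is just a restatement of the paper's set $A_n$ of solutions to $n=x(x+y)$ with $1\le y\le x-1$, parts (2) and (3) are verified by the same divisor checks, the lower bound in (1) comes from the same family $p(p+1)$ via the prime number theorem, and your upper bound $X-\#\{n\le X:h(n)=0\}$ is identical to the paper's bound via $\#\{n\le X:h(n)\neq 0\}$ from Theorem \ref{theorem1}(1).
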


%%%%%%%%%%

\begin{theorem}\label{theorem4}
For $n=p_{1}^{2}p_{2}^{2}$ with $p_{1}$ and $p_{2}$ odd prime and $p_{1} < p_{2} < \sqrt{2}p_{1}$, we have $h(n)=3.$\end{theorem}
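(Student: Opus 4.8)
The plan is to express $h(n)$ as a signed count of divisors and then enumerate the nine divisors of $p_1^2p_2^2$ explicitly. Expanding the geometric series in the defining formula $h(q)=\sum_{n\ge 1}(-1)^{n+1}q^{n(n+1)/2}/(1-q^n)$ and collecting the coefficient of $q^N$, the exponents that occur are $n(n+1)/2+nm=n\ell/2$ with $\ell=n+1+2m>n$ and $\ell\not\equiv n\pmod 2$, so
$$h(N)=\sum_{\substack{n\ell=2N,\ \ell>n\\ n\not\equiv\ell\pmod 2}}(-1)^{n+1}.$$
In each such factorization exactly one of $n,\ell$ is even; writing $2N=2^aM$ with $M$ odd, the odd factor is a divisor $d\mid M$ and the even factor is $2N/d$. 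The constraint $\ell>n$ then says either $d<\sqrt{2N}$ (here $d$ plays the role of the odd, smaller $n$, contributing $+1$) or $d>\sqrt{2N}$ (here $d=\ell$ and $n=2N/d$ is even, contributing $-1$); the case $d=\sqrt{2N}$ is impossible since $2N$ is not the square of an odd number. Hence, with $M$ the odd part of $N$,
$$h(N)=\#\{d\mid M:\ d<\sqrt{2N}\}-\#\{d\mid M:\ d>\sqrt{2N}\}.$$
This is the natural tool, and the same identity drives Theorems \ref{theorem1}--\ref{theorem3}.

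Next I would specialize to $n=p_1^2p_2^2$ with $p_1<p_2$ odd primes. Then $n$ is odd, so $M=n$ and the threshold is $\sqrt{2n}=\sqrt2\,p_1p_2$. The nine divisors of $n$ are $p_1^ip_2^j$ for $0\le i,j\le 2$, and $p_1^ip_2^j<\sqrt2\,p_1p_2$ is equivalent to $p_1^{i-1}p_2^{j-1}<\sqrt2$. The exponent pairs $(0,0),(1,0),(0,1),(2,0),(1,1)$ give $p_1^{i-1}p_2^{j-1}\le 1$ (using $p_1<p_2$ for the pair $(2,0)$), so those five divisors fall below the threshold; the pair $(0,2)$ gives the ratio $p_2/p_1$, which is $<\sqrt2$ precisely by the hypothesis $p_2<\sqrt2\,p_1$; and the pairs $(2,1),(1,2),(2,2)$ give $p_1,\,p_2,\,p_1p_2$ respectively, each $>\sqrt2$ since $p_1,p_2\ge 3$. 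All comparisons are strict because $\sqrt2$ is irrational while the quantities involved are rational. Thus six divisors lie below $\sqrt{2n}$ and three lie above, and the displayed identity gives $h(n)=6-3=3$.

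The one genuinely load‑bearing comparison is $p_2^2$ against $\sqrt2\,p_1p_2$, i.e.\ $p_2$ against $\sqrt2\,p_1$: this is exactly the hypothesis, and it is what separates the present case from the regime $p_2>\sqrt2\,p_1$, where the identical enumeration instead yields $5-4=1$ (matching the companion statement in Theorem \ref{theorem2}). Beyond writing the divisor identity carefully and verifying the nine elementary inequalities, I do not anticipate any real obstacle; the proof is essentially a bookkeeping argument once the formula for $h(N)$ is in place.
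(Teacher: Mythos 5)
Your proof is correct, but it reaches the conclusion by a genuinely different route than the paper. The paper works from the Hecke-type representation \eqref{h(n)}, $h(n)=2\#A_n+\#B_n+\#C_n$ (quoted from Lemma 2.2 of the Andrews--Chan--Kim--Osburn paper), where $A_n$ counts representations $n=x(x+y)$ with $1\le y\le x-1$ and $B_n$, $C_n$ detect squares and double squares; it then checks that for $n=p_1^2p_2^2$ the only admissible $x$ among the candidate divisors $p_1$, $p_2$, $p_1^2$ is $x=p_1^2$, the hypothesis $p_2<\sqrt2\,p_1$ being exactly what makes $y=p_2^2-p_1^2\le p_1^2-1$ admissible, so $h(n)=2\cdot 1+1+0=3$. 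You instead expand the geometric series in the defining sum and derive the signed divisor identity $h(N)=\#\{d\mid M:\ d<\sqrt{2N}\}-\#\{d\mid M:\ d>\sqrt{2N}\}$, with $M$ the odd part of $2N$. Your derivation is sound: the mixed-parity factorizations $2N=n\ell$ are parametrized exactly by the odd divisors $d$ of $M$ (the complementary factor $2N/d$ is automatically even), and $d=\sqrt{2N}$ is impossible; the identity is equivalent to \eqref{h(n)} via the involution $d\mapsto n/d$, which for odd $n$ turns your count into $\#\{d\mid n:\sqrt{n/2}<d<\sqrt{2n}\}=2\#A_n+\#B_n$. The enumeration of the nine divisors of $p_1^2p_2^2$ is correct, and you rightly isolate $p_2^2$ versus $\sqrt2\,p_1p_2$ as the single comparison where the hypothesis enters, which also explains the dichotomy with part (4) of Theorem \ref{theorem2} (whose stated condition $p_2>\sqrt{p_1}$ is evidently a typo for $p_2>\sqrt2\,p_1$). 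What your identity buys is uniformity and self-containedness: one bookkeeping rule derived directly from the Lerch sum handles Theorems \ref{theorem1}--\ref{theorem5}; what the paper's decomposition buys is a citable formula that displays the square and double-square contributions separately.
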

%%%%%%%%%%%%%
\begin{remark}
Our theorems above show that $h(n)$ can attain $0$, $1$, $2$, $3$ infinite many times.
Numerical experiments suggest more. See the conjectures in Section 3. 
\end{remark}

\begin{theorem}\label{theorem5}
$h(n)$ is odd if and only if $n$ is a square or a double square, hence for
 almost all positive integers $n$, $h(n) \equiv 0 \pmod 2$. 
\end{theorem}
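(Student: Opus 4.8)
The plan is to convert $h(n)$ into an arithmetic expression by expanding the geometric series. Writing $\frac{1}{1-q^n}=\sum_{k\ge 0}q^{kn}$ gives
\[
h(q)=\sum_{n\ge 1}(-1)^{n+1}\sum_{k\ge 0}q^{\frac{n(n+1)}{2}+kn}
     =\sum_{n\ge 1}(-1)^{n+1}\sum_{k\ge 0}q^{\frac{n(n+2k+1)}{2}}.
\]
Setting $m:=n+2k+1$, the condition $k\ge 0$ becomes $m>n$ and $m\equiv n+1\pmod 2$ (that is, $n$ and $m$ have opposite parity), and the exponent is $nm/2$. Hence, for $N\ge 1$,
\[
h(N)=\sum_{\substack{nm=2N\\ n<m,\ n+m\ \text{odd}}}(-1)^{n+1}.
\]
In any such factorization exactly one of $n,m$ is odd. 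Let $b$ denote the odd part of $N$ (equivalently of $2N$); then the odd factor is a divisor $d$ of $b$ and the other factor is $2N/d$ (which is even, since its $2$-adic valuation is that of $2N$, at least one). This sets up a bijection between divisors $d\mid b$ and admissible unordered pairs $\{n,m\}$. Splitting according to which factor is the smaller: either $n=d$ is odd (forcing $d<2N/d$, i.e.\ $d<\sqrt{2N}$, with contribution $(-1)^{d+1}=+1$) or $n=2N/d$ is even (forcing $d>\sqrt{2N}$, with contribution $-1$).

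Therefore
\[
h(N)=\#\{d\mid b:\ d<\sqrt{2N}\}-\#\{d\mid b:\ d>\sqrt{2N}\}.
\]
Every divisor $d$ of $b$ is odd, so $d^2$ is odd and cannot equal the even number $2N$; hence the two sets above partition the full set of divisors of $b$, and the sizes $A,B$ of the two sets satisfy $A+B=\tau(b)$, the number of divisors of $b$. Since $A-B\equiv A+B\pmod 2$, we conclude $h(N)\equiv\tau(b)\pmod 2$, which is odd if and only if $b$ is a perfect square.

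Finally I would translate the condition "the odd part $b$ of $N$ is a perfect square." Writing $N=2^{c}b$ with $b$ odd: if $c$ is even, $N=(2^{c/2})^{2}b$ is a square; if $c$ is odd, $N=2\,(2^{(c-1)/2})^{2}b$ is twice a square; and conversely squares and doubles of squares manifestly have square odd part. This gives the stated equivalence. For the "almost all" clause, the number of $n\le X$ that are a square or twice a square is at most $\sqrt{X}+\sqrt{X/2}=O(\sqrt X)$, so these $n$ have density zero and $h(n)\equiv 0\pmod 2$ for almost all $n$.

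The only delicate point is the bookkeeping in the first step: correctly imposing the parity and ordering constraints that come from the substitution $m=n+2k+1$, verifying that distinct divisors $d\mid b$ give distinct factorizations, and checking that no divisor of $b$ can equal $\sqrt{2N}$, so that the divisor set of $b$ splits cleanly into the "$d<\sqrt{2N}$" and "$d>\sqrt{2N}$" parts. Once that is in place, the result is just the observation that $\tau(b)$ is odd exactly for square $b$.
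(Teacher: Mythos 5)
Your proof is correct, and it takes a genuinely different route from the paper. The paper simply quotes the representation $h(q)=\sum_{n\ge1}\sum_{j=1}^{n-1}2q^{n(n+j)}+\sum_{n\ge1}(q^{n^2}+q^{2n^2})$ (Lemma 2.2 of [ACKO], equation (\ref{rep of h(n)})), reduces it mod $2$ so that only the square and double-square terms survive, and then observes that these exponents have density zero. You instead work directly from the $q$-hypergeometric definition, expand $1/(1-q^n)$, and arrive at the exact closed formula
\[
h(N)=\#\{d\mid b:\ d<\sqrt{2N}\}-\#\{d\mid b:\ d>\sqrt{2N}\},
\]
where $b$ is the odd part of $N$; the parity statement then drops out from $A-B\equiv A+B=\tau(b)\pmod 2$ and the classical fact that $\tau(b)$ is odd iff $b$ is a square. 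Your bookkeeping is sound: the substitution $m=n+2k+1$ correctly encodes the parity and ordering constraints, the bijection with divisors of $b$ is right because exactly one factor of $2N=nm$ is odd, and the observation that no odd $d$ satisfies $d^2=2N$ cleanly rules out the boundary case. What your approach buys is self-containedness (no appeal to the cited lemma) and a sharper intermediate result: your divisor formula is an explicit arithmetic evaluation of $h(N)$, equivalent to the paper's $h(n)=2\#A_n+\#B_n+\#C_n$ but packaged so that the parity is immediate, and it could equally well be used to reprove the other theorems in the paper. What the paper's route buys is brevity, at the cost of outsourcing the combinatorial identity to [ACKO].
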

\begin{remark}
 The partition function $p(n)$, its values distributions are simple, just growing fast,  but has strange parity (conjecturally equi-distributed see \cite{Ahl}, \cite{BYZ}, \cite{Ono}). The distributions of the values of $h(n)$ are strange,  but $h(n)$ has simple parity.
\end{remark}
%%%%%%%%

%%%%%%%%%%%%%%

The paper is organized as follows.  
In Section 2, we prove  our main theorems. 
In Section 3, we give further conjectures about the distributions of the values of $h(n)$.

%%%%%%%%%%%%%%%%%%%%%%%%%%%%%%%%%

\section{Proof of the main theorems}
We start with representing  $h(q)$ as
\begin{equation} \label{rep of h(n)}
\begin{aligned}
h(q)=\sum_{n=0}^{\infty} h(n)q^{n} 
=\sum_{n=1}^{\infty}\sum_{j=1}^{n-1} 2q^{n(n+j)}+\sum_{n=1}^{\infty} (q^{n^{2}}+q^{2n^{2}}).
\end{aligned}
\end{equation}
This is the Lemma 2.2 in \cite{ACKO}.
Define three sets $A_{n}$, $B_{n}$ and $C_{n}$ as follows:
\begin{equation*} 
\begin{aligned}
A_{n}=\left\{ (x,y)| n=x(x+y),    \,\mbox{ with } x, y \mbox{ are positive integers and  }\,  1\leq y\leq x-1.   \right\}
\end{aligned}
\end{equation*}
\begin{equation*} 
\begin{aligned}
B_{n}=\left\{ x \,| n=x^{2},    \,\mbox{ with } x  \mbox{ is an positive integer }.   \right\}
\end{aligned}
\end{equation*}
\begin{equation*} 
\begin{aligned}
C_{n}=\left\{ x \,| n=2x^{2},    \,\mbox{ with } x  \mbox{ is an positive integer }.   \right\}
\end{aligned}
\end{equation*}
By  (\ref{rep of h(n)}), 
\begin{equation}\label{h(n)}
h(n)=2\# A_{n} + \# B_{n} + \# C_{n}.
\end{equation}
\begin{proof}[Proof of Theorem \ref{theorem1}]\noindent (1)
We note that $h(n)=0$ if and only if $n$ is neither a square nor a double square or does not appear in the 
following sequeences of numbers for any $m\geq 1$,
$$
m^{2}+m, \, m^{2}+2m,\, m^{2}+3m,\,\dots , m^{2}+(m-1)x.
$$
So  $h(n)\neq 0$ if and only if $n$ appears in the following numbers for some $m$,
\begin{equation*}
\begin{split}
&1,\,\,\,\,\,\,\,\,2,\\
&2^{2},\, \,\,\,\,\,2^{2}+2,\,\,\,\,\, 2\times2^{2},\\
&3^{2},\, \,\,\,\,\,3^{2} +3,\, \,\,\,\,\,3^{2} +2\times3, \,\,\,\, 2\times 3^{2},\\
&\dots\,\,\,\,\,\,\,\,\,\dots\,\,\,\,\,\,\,\,\,\,\,\,\,\,\,\dots \,\,\,\,\,\,\,\,\,\,\,\,\,\,\,\,\,\,\,\,\,\,\,\,\,\, \dots \\
&m^{2}, \,\,\,\,\,\, m^{2}+m,\,\,\,\,\,\,\,m^{2}+2m,\,\,\,\,\,\,m^{2}+3m,\,\,\,\,\,\, \dots, \,\,\,\,\,\,2m^{2}.\\
%&\dots\,\,\,\,\,\,\,\,\,  \quad     \dots\,\,\,\,\,\,\,\,\,\,\quad\,\,\,\,\,\dots \,\,\,\,\,\,\,\,\,\,\,\,\,\,\, \quad\,\,\,\,\dots\,\,\,\,\,\,\,\,\quad\dots\\
\end{split}
\end{equation*}
For sufficient large $X$, let $m$ be the positive integer such that $$2m^{2}  \leq X < 2(m+1)^{2},$$ that is 
\begin{equation}\label{m}
 \sqrt{\frac{X}{2}} -1   \leq X  \leq \sqrt{\frac{X}{2}}.
\end{equation}
Then from $1$ to $X$, the number of $n$ such that $h(n)\neq 0$ is at least 
\begin{equation*}
\begin{split}
&2+3+4+\dots+m+(m+1)\\
&=\frac{(m+1)(m+2)}{2}-1\\
&\geq \frac{X}{4}+\frac{1}{2\sqrt{2}}\sqrt{X} -1
\end{split}
\end{equation*}
by (\ref{m}). Hence the number of $n$ such that $h(n)$ is zero is at most 
$$\frac{3X}{4}-\frac{1}{2\sqrt{2}}\sqrt{X} +1.$$
This prove the inequality of the right hand side of (\ref{inequality1}). 

For the inequality of the left hand side of (\ref{inequality1}),
let $k$ be the maximal positive integer such that $(m+k)^{2}\leq 2(m+1)^{2}$, note that besides the numbers listed above, some other numbers $n$ still can be $\leq X$ and $h(n)\neq 0$, but they must appear in the following sequences.
\begin{equation*}
\begin{split}
&(m+1)^{2},\,\,\,\,(m+1)^{2}+1\times(m+1),\,\,\,\,(m+1)^{2}+2\times(m+1),\,\,\,\,\dots,\,\,\,(m+1)^{2}+\times(m+1)^{2},\\
&(m+2)^{2},\,\,\,\,(m+2)^{2}+1\times(m+2),\,\,\,\,(m+2)^{2}+2\times(m+2),\,\,\,\,\dots,\,\,\,(m+2)^{2}+\times(m+2)^{2},\\
&\quad\dots\,\,\,\,\,\quad\quad\quad\quad\,\,\quad\quad\,\dots\,\quad\quad\quad\quad\quad\,\quad\quad\quad\quad\dots \quad\quad\quad\quad\quad\quad\quad\quad\quad\quad\quad \dots \\
&(m+k)^{2},\,\,\,\,(m+k)^{2}+1\times(m+k),\,\,\,\,(m+k)^{2}+2\times(m+k),\,\,\,\,\dots,\,\,\,(m+k)^{2}+\times(m+k)^{2}.\\
\end{split}
\end{equation*}
Hence, the number of $n$ such that $1\leq n\leq X$ and $h(n)\neq 0$ is at most
\begin{equation*}
\begin{split}
(2+3+4+\dots+m+(m&+1))+((m+1+1)+(m+2+1)+\dots+(m+k+1))\\
&=\frac{(m+k+1)(m+k+2)}{2}-1\\
&=\frac{1}{2}(m+k)^{2}+\frac{3}{2}(m+k)\\
&\leq \frac{X}{2}+(\frac{3}{2}+\sqrt{2})\sqrt{X} +1+\frac{3\sqrt{2}}{2},
\end{split}
\end{equation*}
by the inequalities $(m+k)\leq \sqrt{2}(m+1)$ and $m+1\leq \sqrt{\frac{X}{2}}+1$. This is equivalent to the inequality of the left hand side of (\ref{inequality1}).

\noindent (2) For $n=p^{d}$ with $p$ an odd prime and $d$ an odd positive integer,  it is only to show $A_{2}(n)$ is empty. Suppose the equation $ p^{d}=x(x+y)$ has a solution $(x,y)$ with $x>1$ and $x-1 \geq y \geq 1$. We must have 
$$ x=p^{d_{1}},\,\, y=p^{d_{2}} \,\,\mbox{ with } \,\,d_{2} > d_{1}.$$
This implies that 
$$
y=p^{d_{2}}-p^{d_{1}}=p^{d_{1}}(p^{d_{2}-d_{1}}-1)> 2p^{d_{1}}=2x,
$$
since
$$
d_{2}-d_{1} \geq 1,\,\,\, p\geq 3.
$$
This contradicts the assumption $y<x-1$.

\noindent (3) for $n=p_{1}p_{2}p_{3}\dots p_{k}$ with $p_{i}$ distinct odd primes and $$p_{1}<p_{2}<p_{3}<\dots<p_{k-1}, \,\,\,p_{k}>2p_{1}p_{2}\dots p_{k-1},$$
$n$ is neither a square nor a double square, it is only to show $A_{2}(n)$ is empty. Suppose the equation $$ p_{1}p_{2}p_{3}\dots p_{k}=x(x+y)$$ has a solution $(x,y)$ with $x>1$ and $x-1 \geq y \geq 1$. We consider two cases. If $p_{k}$ is 
a prime divisor of $x$, then we have
$$
x\geq p_{k} >2p_{1}p_{2}p_{3}\dots p_{k-1} \geq 2(x+y). 
$$
Hence $y\leq -\frac{x}{2}$, this is impossible. If $p_{k}$ is a prime divisor of $x+y$, then we have
$$
x+y\geq p_{k} >2p_{1}p_{2}p_{3}\dots p_{k-1} \geq 2x,
$$
so $y>x$. This also contradicts the assumption. 

These complete the proof of Theorem \ref{theorem1}.
\end{proof}

 \begin{proof}[Proof of Theorem \ref{theorem2} ]
\noindent (1) By ( \ref{h(n)}), $h(n)=1$ if and only if $A_{n}$ is empty and $n$ is a square or double square. From
$1$ to $X$, the number of squares is about $\sqrt{X}$ and the number of double squares is  about $\sqrt{\frac{X}{2}}$. Hence the number of $1\leq n\leq X$ such that $h(n)=1$ is at most $(1+\frac{1}{\sqrt{2}})\sqrt{X}$. On the other side, for any prime
$p$, the sets $B_{p^{2}},\, C_{p^{2}}$ are all empty, hence for each $n=p^{2}$ with $1\leq p^{2}\leq X$, we have $h(n)=1$.
By The Prime Number Theorem, the number of squares of primes  is about $\frac{\sqrt{X}}{\log \sqrt{X}}=\frac{2\sqrt{X}}{\log X}$. This completes the proof of the part (1).

(2) If $n=2^{k}, \, k\geq 1$, we must have $A_{n}$ is empty. Since if we have a solution $(x,y)$ such that
$$
2^{k}=x(x+y), \,\,\, 1 \leq y \leq x-1, 
$$
then we can assume that 
$$
x=2^{k_{1}},\, x+y= 2^{k_{2}},\, k_{2}> k_{1}.
$$
Therefore, 
$$
y=2^{k_{2}}-2^{k_{1}}=2^{k_{1}}(2^{k_{2}-k_{1}}-1)\geq 2^{k_{1}}=x,
$$
which contradicts the assumption that $1\leq y \leq x-1$. Moreover, If $k$ is even, then $n$ is a square. If $k$ is odd, then
$n$ is a double square. These complete the proof of part (2) of the Theorem\ref{theorem2}.

\noindent (3) For $n=p^{2k}$ with $p$ odd prime and $k$ positive integer, $n$ can not be a double square and there is no
solution $(x,y)$ with $1\leq y\leq x-1$ such that $p^{2k}=x(x+y)$. Otherwise, we can assume that 
$$
x=p^{k_{1}},\, x+y=p^{k_{2}},\, k_{2}> k_{1}.
$$
As before
$$
y=p^{k_{2}}-p^{k_{1}}=p^{k_{1}}(p^{k_{2}-k_{1}}-1)\geq 2p^{k_{1}}=2x,
$$
which is not possible.

\noindent (4) For $n=p_{1}^{2}p_{2}^{2}$ with the assumption that $p_{1}$ and $p_{2}$ are odd primes with $p_{2} > \sqrt{2}p_{1}$. it is only to show that the equation 
$$
2^{k}=x(x+y), \,\,\, 1 \leq y \leq x-1, 
$$
has no solutions. We note that $x$ can be $p_{1}$ or $p_{1}^{2}$. for the case $x=p_{1}$, we have 
$$
x+y=p_{1}p_{2}^{2},\,\mbox{  hence } y=p_{1}(p_{2}^{2}-1) > 2p_{1}.
$$
For the case $x=p_{1}^{2}$, we have $x+y=p_{2}^{2}$, hence $y=p_{2}^{2}-p_{1}^{2}>p_{1}^{2}=x$, since $p_{2} > \sqrt{2}p_{1}$. This shows the equation above has no solutions.
The proof of Theorem\ref{theorem2} is completed. 
\end{proof}

\begin{proof}[Proof of Theorem \ref{theorem3}]
(1) We note that
\begin{align*}
 \# \{ 1\leq n \leq X| h(n) =2   \} < \# \{ 1\leq n \leq X| h(n) \neq 0   \}. 
\end{align*}
The right hand side of the inequality above is at most $$\frac{X}{2}+(\frac{3}{2}+\sqrt{2})\sqrt{X} +1+\frac{3\sqrt{2}}{2},$$ which is proved in part one of theorem\ref{theorem1}.  As for the left hand side of (\ref{1-theorem3}), this follows from the part (3) of the  current theorem. Since from $1$ to $X$, there are about $ \frac{\sqrt{x}}{\log \sqrt{X}}$ numbers which are the forms $p(p+1)$ by the Prime Number Theorem.

(2) For $n=p_{1}p_{2}$ with $p_{1} <p_{2}<2p_{1}$. Firstly, by a theorem of Betrand-Chebyshev, for any prime $p_{1}$, there is at least one $p_{2}$ satisfying $p_{1} <p_{2}<2p_{1}$. Moreover,
 $n$ can not be a square or a double square and the equation
$$
p_{1}p_{2}=x(x+y), \,\,\, 1 \leq y \leq x-1
$$
has a unique solution $(x, y)=(p_{1}, p_{2})$ by the condition $p_{1} <p_{2}<2p_{1}$. Hence $h(n)=2$.

(3) For $n=p(p+1)$ with $p$ odd prime. since $p$ and $p+1$ are coprime, hence $n$ can not be a square or a double
square. Consider the equation
$$
p(p+1)=x(x+y), \,\,\, 1 \leq y \leq x-1,
$$
the only solution is $x=p$ and $y=1$, therefore $h(n)=2$.  The analysis for the case $n=(p-1)p$ is similar.
\end{proof}

\begin{proof}[Proof of Theorem \ref{theorem4}]
(1) For $n=p_{1}^{2}p_{2}^{2}$ with $p_{1}$ and $p_{2}$ odd primes and $p_{1} < p_{2} < \sqrt{2}p_{1}$, by a theorem  in
\cite{KL}, for any sufficient large $p_{1}$, there is at least one prime $p_{2}$ satisfying $p_{1} < p_{2} < \sqrt{2}p_{1}$. $n$
can not be a double square, it is only to show there is only one pair $(x,y)$ with $1 \leq y \leq x-1$ satisfying the equation
$$
p_{1}^{2}p_{2}^{2}=x(x+y).
$$
Here, $x$ can be $p_{1}$, $p_{1}^{2}$, $p_{2}$.  $x=p_{1}$ is impossible, this results in $y=p_{1}(p_{2}^{2}-1)>2p_{1}=2x$.
$x=p_{2}$ is also impossible, it also results  in  $y=p_{2}(p_{1}^{2}-1)>2p_{2}=2x$.  The only solution to the equation above is $(x, y)=(p_{1}^{2}, p_{2}^{2})$, because of the condition $p_{1} < p_{2} < \sqrt{2}p_{1}$.

\end{proof}
\begin{proof}[Proof of Theorem \ref{theorem5}]
 By (\ref{rep of h(n)}), 
$$
h(q) \equiv \sum_{n=1}^{\infty} (q^{n^{2}}+q^{2n^{2}}) \pmod{2},
$$
$h(n)$ is odd if and only if $n$ is a square or a double square and the destiny of square numbers and double square numbers is zero.  Therefore, for almost all integers $n$, $h(n)$ is even.
\end{proof}%%%%%%%%%%%%%%%%%

\section{Questions for further study}
We use  some quadratic equations to study the small values of $h(n)$, but it is unclear whether there is a relation
between the function $h(q)$ and quadratic fields. This is worth investigating. For the distributions of the coefficients of $h(q)$, the experiments suggest more should be true. Here we list our conjectures based on our experiments.
\begin{conjecture}
$h(n)$ can attain each positive integers infinite many times, In particular, we have
$
\limsup h(n) = \infty.
$
\end{conjecture}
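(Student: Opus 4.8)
We indicate a possible line of attack. By (\ref{h(n)}) we have $h(n)=2\#A_n+\#B_n+\#C_n$, and $\#B_n+\#C_n$ equals $1$ precisely when $n$ is a square or a double square (these are mutually exclusive, since $x^2=2y^2$ has no positive integer solution) and $0$ otherwise. Writing $z=x+y$, a pair counted by $A_n$ is exactly a factorization $n=xz$ with $x<z<2x$, i.e. a divisor $x$ of $n$ with $\sqrt{n/2}<x<\sqrt n$; conversely every such divisor arises this way. So, setting $D(n):=\#\{\,d\mid n:\ \sqrt{n/2}<d<\sqrt n\,\}$ and letting $\varepsilon(n)\in\{0,1\}$ record whether $n$ is a square or a double square, we have $h(n)=2D(n)+\varepsilon(n)$, and the conjecture becomes: for every integer $v\ge 1$ there are infinitely many $n$ with $2D(n)+\varepsilon(n)=v$. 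Its stated consequence $\limsup h(n)=\infty$ is the weaker assertion that $D(n)$ is unbounded, which I would settle first.

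For that weaker assertion, the plan is to exhibit $n$ with $D(n)$ as large as we please. Given $k$, the Prime Number Theorem guarantees more than $k$ primes in $(t,\sqrt 2\,t)$ once $t$ is large; choose such primes $p_1<\cdots<p_k$ and put $n=(p_1\cdots p_k)^2$, so $\sqrt n=p_1\cdots p_k$. For each pair $1\le j<i\le k$ the integer $d_{i,j}:=\sqrt n\cdot p_j/p_i=p_j^{2}\prod_{l\ne i,j}p_l$ divides $n$, and from $p_j<p_i<\sqrt 2\,p_j$ (all $p_l$ lie in $(t,\sqrt2\,t)$) we get $1/\sqrt2<p_j/p_i<1$, hence $d_{i,j}\in(\sqrt{n/2},\sqrt n)$; unique factorization makes the $d_{i,j}$ pairwise distinct. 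Thus $D(n)\ge\binom{k}{2}$ and $h(n)\ge k(k-1)+1$, so $\limsup_n h(n)=\infty$. (For $k=2$ this already gives $h\bigl((p_1p_2)^2\bigr)\ge 3$, in line with Theorem~\ref{theorem4}.)

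The real difficulty is the full conjecture: realizing \emph{every} value $v$, not merely unbounded ones. Writing $m=\lfloor v/2\rfloor$, one must produce infinitely many $n$ with $D(n)$ \emph{exactly} $m$ and $\varepsilon(n)\equiv v\pmod 2$. The natural route is to follow the pattern of Theorems~\ref{theorem1}--\ref{theorem4} and take $n$ from an explicit family $n=f(q_1,q_2,\dots)$ in auxiliary primes, arranged (by prescribing the relative sizes of the $q_i$, as in the conditions $p_2<2p_1$ or $p_2<\sqrt2\,p_1$ used there) so that all factorizations $n=xz$ with $x<z<2x$ can be enumerated and $D(n)$ pinned to a constant. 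I expect the main obstacle to be exactly this exactness requirement: ruling out \emph{all} unwanted divisors of $n$ in the multiplicatively short window $(\sqrt{n/2},\sqrt n)$ (width only the factor $\sqrt2$) is an instance of the notoriously delicate problem of bounding the number of divisors of a single integer in a short interval --- the quantity measured by Hooley's $\Delta$-function, and close to the multiplication-table problem --- for which one lacks the uniform, family-by-family control that such a construction would need; and one must still match the parity $\varepsilon(n)$, which couples the construction to squares and double squares. A softer analytic approach, lower-bounding $\#\{n\le X:D(n)=m\}$ by sieve methods, meets the same obstruction of passing from ``at least $m$'' to ``exactly $m$''. This is why, at present, only the $\limsup$ statement and the small cases $v\le 3$ of Theorems~\ref{theorem1}--\ref{theorem4} seem within reach.
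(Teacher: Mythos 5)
This statement is Conjecture~1 of the paper: the author offers no proof of it, only the numerical evidence in Table~1 and the partial results of Theorems~\ref{theorem1}--\ref{theorem4} (which realize the values $0,1,2,3$ infinitely often). So there is no proof in the paper to compare yours against, and you are right not to claim a complete proof. Your reduction is correct and is exactly the arithmetic content of \eqref{h(n)}: substituting $z=x+y$ identifies $\#A_n$ with the number of divisors of $n$ in the open interval $(\sqrt{n/2},\sqrt n)$, and $\#B_n+\#C_n$ is the indicator of $n$ being a square or a double square (mutually exclusive by irrationality of $\sqrt2$), so $h(n)=2D(n)+\varepsilon(n)$.

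Your argument for the weaker assertion $\limsup h(n)=\infty$ checks out and actually goes beyond what the paper establishes. For $k$ primes $p_1<\cdots<p_k$ in $(t,\sqrt2\,t)$ (which exist for large $t$ by the Prime Number Theorem) and $n=(p_1\cdots p_k)^2$, each $d_{i,j}=p_j^2\prod_{l\ne i,j}p_l$ with $j<i$ is a divisor of $n$ lying strictly in $(\sqrt{n/2},\sqrt n)$, and unique factorization (the prime with exponent $2$ identifies $j$, the missing prime identifies $i$) makes them pairwise distinct, so $h(n)\ge k(k-1)+1$. This is a clean generalization of the mechanism behind Theorem~\ref{theorem4} and, if written up carefully, would settle the ``$\limsup$'' half of the conjecture unconditionally. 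The genuine gap is the one you name yourself: producing, for \emph{every} $v$, infinitely many $n$ with $D(n)$ exactly $\lfloor v/2\rfloor$ and the right parity $\varepsilon(n)$. The ``at least'' direction of your construction gives no control from above, and pinning $D(n)$ to an exact value requires excluding all other divisors of $n$ from the multiplicatively narrow window $(\sqrt{n/2},\sqrt n)$ --- precisely the difficulty (of Hooley $\Delta$-function type) you identify. So the full conjecture remains open; your proposal is an honest and correct partial attack rather than a proof.
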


 This conjectural property  for  $h(n)$ is similar to that of $S(n)$, which is our original motivating of investigating the function $h(q)$.  However, the following conjecture shows $h(q)$ is completely different from $\sigma(q)$.

\begin{conjecture}

For sufficient large $X$,
\begin{align*}
\# \{ 1\leq n \leq X| h(n) =0  \} \asymp  \frac{3X}{4}-\frac{1}{2\sqrt{2}}\sqrt{X},
\end{align*} 
 %so $h(n)$ is not lacunary.
 \end{conjecture}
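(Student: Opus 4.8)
The plan is to recast the conjecture as a statement about the two central divisors of $n$, carry out the resulting first-moment computation, and then isolate the one genuinely hard ingredient. By \eqref{h(n)} we have $h(n)=2\#A_n+\#B_n+\#C_n$, so $h(n)\neq 0$ exactly when
\[
D(n):=\#A_n+\#B_n+\#C_n\geq 1 ,
\]
and, unwinding the definitions of $A_n,B_n,C_n$, the quantity $D(n)$ is precisely the number of divisors $d$ of $n$ with $\sqrt{n/2}\leq d\leq\sqrt n$ — the endpoints $d=\sqrt n$ and $d=\sqrt{n/2}$ being recorded by $\#B_n$ and $\#C_n$ and the open interior by $\#A_n$. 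Writing $d^{-}(n)$ for the largest divisor of $n$ that is at most $\sqrt n$ and $d^{+}(n)=n/d^{-}(n)$ for the least one that is at least $\sqrt n$, this reads
\[
h(n)\neq 0 \iff d^{+}(n)\leq 2\,d^{-}(n).
\]
Hence, putting $N(X):=\#\{1\le n\le X:h(n)\neq 0\}$, the conjecture is equivalent to $N(X)=\tfrac14X+o(X)$ (in its sharp form $N(X)=\tfrac14X+\tfrac1{2\sqrt2}\sqrt X+o(\sqrt X)$), since $\#\{n\le X:h(n)=0\}=X-N(X)$ then has the stated shape. Theorem~\ref{theorem1}(1) already supplies $\tfrac14X+O(\sqrt X)\le N(X)\le \tfrac12X+O(\sqrt X)$; the lower bound is exactly the conjectured main term, so the whole problem is to push the upper bound from $\tfrac12X$ down to $\tfrac14X+o(X)$.

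For that I would exploit the elementary identity $\mathbf{1}[D(n)\geq 1]=D(n)-(D(n)-1)^{+}$, valid for every nonnegative integer $D(n)$, which yields
\[
N(X)=\sum_{n\le X}D(n)\;-\;\sum_{n\le X}\bigl(D(n)-1\bigr)^{+}.
\]
The first sum is a routine divisor count: interchanging summation, $\sum_{n\le X}D(n)=\sum_{m\ge 1}\#\{m\le\ell\le 2m:\ m\ell\le X\}$, and splitting at $m\asymp\sqrt{X/2}$ and $m\asymp\sqrt X$ (the contributions of $\#B_n$ and $\#C_n$ being only $O(\sqrt X)$) gives $\sum_{n\le X}D(n)=\tfrac{\log 2}{2}\,X+O(\sqrt X)$. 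Therefore the conjecture is \emph{equivalent} to the asymptotic
\[
\sum_{n\le X}\bigl(D(n)-1\bigr)^{+}=\Bigl(\tfrac{\log 2}{2}-\tfrac14\Bigr)X+o(X),
\]
i.e.\ to pinning down, with the correct constant, the ``excess'' contribution of integers $n$ possessing more than one divisor in the short window $[\sqrt{n/2},\sqrt n\,]$.

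Evaluating this excess sum is, I expect, the crux and the real difficulty, and it is not a routine perturbation. The key feature is that the excess is \emph{not} of lower order: the first moment $\tfrac{\log 2}{2}X\approx 0.347\,X$ already overshoots the conjectured count $\tfrac14X$, so a positive proportion of the divisor mass lies on integers having at least two divisors inside a multiplicative window of length only $\sqrt2$ about $\sqrt n$. Handling this needs genuine second-order information on the distribution of divisors near $\sqrt n$ — equivalently on the quantity $\log\!\bigl(d^{+}(n)/d^{-}(n)\bigr)=2\bigl(\tfrac12\log n-\log d^{-}(n)\bigr)$ — which is exactly the circle of ideas of Hooley's $\Delta$-function, Tenenbaum's results on the distribution function of $\{\log d/\log n:\ d\mid n\}$, and Ford's sharp estimates for the number of integers up to $x$ with a divisor in an interval $(y,z]$ with $z/y$ bounded. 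A crude independence heuristic does \emph{not} locate the constant here (it would predict density $2^{-1/2}$ for the zero set, not $\tfrac34$); it is precisely the deviation of the true divisor distribution from that heuristic that must be quantified. Note too that a naive Bonferroni/inclusion–exclusion over pairs $m_1<m_2$ of the progressions $\{m^2,m^2+m,\dots,2m^2\}$ cannot be used directly, since $\sum_{n\le X}\binom{D(n)}{2}$ majorises a Hooley-$\Delta$-type sum and is already of size $\gg X\log\log X$, which swamps the target error — so one must extract concentration rather than bound termwise.

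Concretely, for the last step I would condition on $d^{-}(n)=d$ and $n/d=e$ with $d\le e\le 2d$, writing $n=de$, count such pairs $(d,e)$, and then correct the count-versus-indicator discrepancy with an upper-bound sieve for the number of divisors of $de$ lying in the neighbouring short intervals; the Ford–Tenenbaum-type inputs would enter exactly in showing that this correction equals $\bigl(\tfrac{\log2}{2}-\tfrac14\bigr)X+o(X)$ rather than merely $O(X)$. Alternatively, if a sufficiently uniform unconditional formula for the density of $n$ with $d^{+}(n)/d^{-}(n)\le 2$ is available in the literature, it would settle the conjecture at once. In any case I expect the heart of a proof to be this divisor-distribution estimate, with the secondary term $-\tfrac1{2\sqrt2}\sqrt X$ requiring a further refinement on top of it.
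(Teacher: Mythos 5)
This statement is a \emph{conjecture} in the paper, so there is no proof of record to measure you against; judged on its own terms, your proposal does not establish it. The reduction in your first two paragraphs is correct and genuinely clarifying: $h(n)\neq 0$ if and only if $n$ has a divisor in $[\sqrt{n/2},\sqrt{n}\,]$, and the first moment is $\sum_{n\le X}D(n)=\frac{\log 2}{2}X+O(\sqrt X)$. But you then declare the decisive estimate --- evaluating $\sum_{n\le X}(D(n)-1)^{+}$ with the precise constant $\frac{\log 2}{2}-\frac14$ --- to be ``the crux and the real difficulty'' and leave it unproved, so the argument simply stops before the theorem. (If one instead reads the conjecture's ``$\asymp$'' literally as ``same order of magnitude,'' it is already an immediate and rather empty consequence of Theorem \ref{theorem1}(1), and none of your machinery is needed; the only substantive reading is the asymptotic equality you adopt.)

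The deeper issue is that the missing step cannot be supplied, because the conjecture in that asymptotic reading is false, and your own reduction exposes this. Every $n\le X$ with $h(n)\neq 0$ factors as $n=de$ with $d\le e\le 2d$, hence with $d\le\sqrt X$ and $e\le 2\sqrt X$; all such $n$ therefore lie among the distinct entries of the $\lfloor 2\sqrt X\rfloor\times\lfloor 2\sqrt X\rfloor$ multiplication table, which by Erd\H{o}s's theorem has cardinality $o(X)$ (Ford's sharp form gives $\#\{n\le X:\ h(n)\neq 0\}\asymp X(\log X)^{-\delta}(\log\log X)^{-3/2}$ with $\delta=1-(1+\log\log 2)/\log 2\approx 0.086$). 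Hence $\#\{1\le n\le X:\ h(n)=0\}=X-o(X)$: the zero set has density $1$, not $\tfrac34$. The paper's table is consistent with this, since the proportion of nonzero values decays ($0.336,\,0.303,\,\dots,\,0.269$) and the predicted rate is slow enough to mimic convergence to a positive constant. You should also not lean on Theorem \ref{theorem1}(1) for the matching lower bound $N(X)\ge\frac X4+O(\sqrt X)$: the paper's argument there counts the pairs $(m,k)$ rather than the distinct integers $m(m+k)$ (e.g.\ $360=15\cdot 24=18\cdot 20$ is counted twice), so it bounds the first moment $\sum_{n\le X}D(n)$, not $N(X)$ --- indeed your own correct value $\frac{\log 2}{2}X\approx 0.347X$ for that moment, exceeding the paper's $\frac X4$, is the repaired version of the very same count. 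The honest conclusion of your framework is the opposite of the conjecture: almost all of the divisor mass concentrates on integers with many divisors in the window, so that $\sum_{n\le X}(D(n)-1)^{+}\sim\frac{\log 2}{2}X$ and $\#\{1\le n\le X:\ h(n)=0\}\sim X$.
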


 \begin{conjecture}
For sufficient large $X$,
\begin{align*}
\# \{ 1\leq n \leq X| h(n) =2   \} \asymp \frac{X}{4}+\frac{1}{2\sqrt{2}}\sqrt{X}.
\end{align*}
This implies that almost all nonzero values are $2$.
\end{conjecture}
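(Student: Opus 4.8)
The plan is to convert the condition $h(n)=2$ into a statement about divisors of $n$ and then to count. By~(\ref{h(n)}) we have $h(n)=2\#A_n+\#B_n+\#C_n$ with $\#B_n,\#C_n\in\{0,1\}$; moreover no $n$ is simultaneously a square and twice a square, so $\#B_n\#C_n=0$, and $\#B_n+\#C_n=1$ precisely when $n$ is a square or a double square, in which case $h(n)$ is odd. Hence $h(n)=2$ if and only if $\#A_n=1$ and $n$ is neither a square nor a double square. Since the squares and double squares up to $X$ number $O(\sqrt X)$, this reduces the problem to
\begin{equation*}
\#\{1\le n\le X: h(n)=2\}=\#\{1\le n\le X:\#A_n=1\}+O(\sqrt X).
\end{equation*}
A pair $(x,y)\in A_n$ is the same as a factorization $n=x(x+y)$ with $x<x+y<2x$, so $\#A_n$ is exactly the number of divisors $d\mid n$ lying in the window $(\sqrt{n/2},\sqrt n)$. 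Thus the task is to count integers $n\le X$ having exactly one divisor in this fixed multiplicative window around $\sqrt n$.

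I would first record the first moment. Interchanging summation and counting multiples of $d$ in $(d^2,2d^2)$,
\begin{equation*}
\sum_{n\le X}\#A_n=\sum_{d}\#\{n\le X:\, d\mid n,\ d^2<n<2d^2\}\approx\sum_{d\le\sqrt{X/2}}d+\sum_{\sqrt{X/2}<d<\sqrt X}\Big(\tfrac{X}{d}-d\Big)\approx\tfrac{\log 2}{2}X,
\end{equation*}
while the first regime $d\le\sqrt{X/2}$ alone contributes $\approx X/4$. This is the source of the conjectured constant: if distinct pairs $(x,x+y)$ almost always produced distinct $n$ and higher multiplicities were negligible, then $\#\{\#A_n=1\}$ would inherit the value $X/4$ from this dominant regime, the boundary terms accounting for the secondary $\sqrt X$ contribution. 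The strategy would then be to prove (i) $\#\{n\le X:\#A_n\ge1\}\sim X/4$ and (ii) $\#\{n\le X:\#A_n\ge2\}=o(X)$, and to conclude $\#\{\#A_n=1\}=\#\{\#A_n\ge1\}-\#\{\#A_n\ge2\}\sim X/4$.

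The main obstacle is step (ii), and it is fundamental. Deciding how the total mass $\sum_n\#A_n$ splits between integers with a unique close factorization and integers with several is precisely the Erd\H{o}s multiplication-table problem, here in the guise of counting integers with a divisor in a short interval (of fixed ratio $\sqrt 2$) about $\sqrt n$. By the Ford--Tenenbaum theory the count $\#\{n\le X:\#A_n\ge1\}$ is not of order $X$ at all but of order $X(\log X)^{-\delta}(\log\log X)^{-3/2}$ with $\delta=1-(1+\log\log 2)/\log 2\approx 0.086$; the constant first moment $\tfrac{\log 2}{2}X$ is carried by a sparse set of highly composite $n$ for which $\#A_n$ is large, so that $\#\{\#A_n\ge2\}$ is comparable to, rather than negligible against, $\#\{\#A_n=1\}$. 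Thus the heuristic behind the conjecture breaks down, and the genuine state of affairs is that $\#\{h(n)=2\}=o(X)$ with an exceedingly slow logarithmic decay --- so slow that throughout the numerically accessible range it is indistinguishable from a fixed proportion of $X$, which is exactly why the experiments point to $\tfrac X4+\tfrac{1}{2\sqrt2}\sqrt X$. A rigorous treatment would therefore have to replace the conjectured constant multiple of $X$ by Ford's order of magnitude and rest on the full divisor-distribution machinery; pinning down even the order, let alone a leading constant, appears to lie at the edge of current techniques.
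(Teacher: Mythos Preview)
The statement in question is a \emph{conjecture}; the paper offers no proof of it, so there is no argument of the paper's to compare against. Your write-up is likewise not a proof of the conjecture --- it is, in effect, a refutation, and the essential point is sound. You correctly reduce $h(n)=2$ to the condition $\#A_n=1$ (up to $O(\sqrt X)$ exceptions from squares and double squares), correctly recognise $\#A_n$ as the number of divisors of $n$ lying in the window $(\sqrt{n/2},\sqrt n\,)$, and correctly identify the governing question as the Erd\H{o}s multiplication-table problem. Indeed, any $n\le X$ with $\#A_n\ge1$ is a product $de$ with $d<e<2d$, hence $d,e\le 2\sqrt X$, so already Erd\H{o}s's 1960 theorem gives
\[
\#\{n\le X:\#A_n\ge 1\}\ \le\ \bigl|\{ab:1\le a,b\le 2\sqrt X\}\bigr|\ =\ o(X),
\]
and Ford's refinement pins the order to $X(\log X)^{-\delta}(\log\log X)^{-3/2}$. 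In particular $\#\{n\le X:h(n)=2\}=o(X)$, so the conjectured order $\asymp X/4$ cannot hold. The data in Table~1 are consistent with this: the proportion of $n\le X$ with $h(n)\neq0$ drifts steadily downward ($0.336,\ 0.303,\ 0.293,\ 0.284,\ 0.275,\ 0.269$ at the six thresholds), as one expects from a slowly decaying factor $(\log X)^{-\delta}(\log\log X)^{-3/2}$ rather than a genuine limiting density $1/4$.

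Your observation also bears on the paper itself. The right-hand inequality in Theorem~\ref{theorem1}(1) is obtained by asserting that the listed values $m^2, m^2+m,\ldots,2m^2$ (over all $m$ up to $\sqrt{X/2}$) furnish at least $2+3+\cdots+(m+1)\approx X/4$ \emph{distinct} integers $n\le X$ with $h(n)\neq0$. But these rows overlap (for instance $72=6\cdot12=8\cdot9$ lies in both the $m=6$ and $m=8$ rows), and the Erd\H{o}s--Ford theory shows the overlap is not negligible: the number of distinct such $n$ is only $o(X)$. Hence the upper bound $\#\{h(n)=0\}<\tfrac{3X}{4}-\tfrac{1}{2\sqrt2}\sqrt X+1$ must fail for all sufficiently large $X$, for exactly the reason the present conjecture fails.

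The only step you leave at the level of a sketch is the passage from Ford's fixed-window estimate $H(x,y,2y)$ to the moving window $(\sqrt{n/2},\sqrt n\,)$; a dyadic partition on $n$ (or the crude containment in the $2\sqrt X\times 2\sqrt X$ multiplication table displayed above) makes this rigorous with no difficulty.
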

Our Theorem\ref{theorem5} coincides with the  Conjecture 1 and Conjecture 2.

\begin{conjecture}
For sufficient large $X$,
\begin{align*}
\# \{ 1\leq n \leq X| h(n) =1  \} \asymp\left(1+ \frac{1}{\sqrt{2}}\right)\sqrt{X}.
\end{align*}
\end{conjecture}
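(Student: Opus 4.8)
The plan is to read the meaning of $h(n)=1$ directly off \eqref{h(n)}. Since $B_n$ and $C_n$ are each of size $0$ or $1$ and cannot both be nonempty (an integer is not simultaneously a square and twice a square), we have $h(n)=1$ exactly when $A_n=\emptyset$ and $n$ is a square or twice a square. Hence
\[
\#\{1\le n\le X:\ h(n)=1\}=\#\{m\le\sqrt X:\ A_{m^2}=\emptyset\}+\#\{m\le\sqrt{X/2}:\ A_{2m^2}=\emptyset\}.
\]
A short computation with the definition of $A_n$ shows that $A_{m^2}\ne\emptyset$ if and only if $m^2$ has a divisor in the interval $(m,m\sqrt2)$, and by the symmetry $d\leftrightarrow m^2/d$ of the divisors of $m^2$ about its central divisor $m$, this happens if and only if $m$ admits a factorisation $m=ab\ell$ with $\gcd(a,b)=1$ and $a<b<a\sqrt2$; the same analysis applies to $A_{2m^2}$. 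Thus the entire question reduces to counting integers $m$ that do, or do not, possess two nearby coprime divisors whose product divides $m$.

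The upper bound is immediate and is precisely Theorem~\ref{theorem2}(1): both counts on the right are bounded by the number of squares plus the number of twice-squares up to $X$, which is $<(1+1/\sqrt2)\sqrt X$. The whole content of the conjecture is therefore the matching lower bound $\#\{h(n)=1\}\gg\sqrt X$; equivalently, that a positive proportion of the squares (or of the twice-squares) below $X$ carry no such middle divisor.

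To manufacture a positive-density family I would exploit a dominant large prime factor. If $m=Ps$ with $P=P(m)$ the largest prime factor, $P>\sqrt2\,s$, and $s$ itself free of a close coprime factorisation, then classifying the divisors of $m^2=P^2s^2$ by the exponent $j$ of $P$ shows that none lies in $(m,m\sqrt2)$: for $j=0$ the divisors stay below $s^2<m$, for $j=2$ they start above $P^2>m\sqrt2$, and $j=1$ would force a divisor of $s^2$ in $(s,s\sqrt2)$, excluded by the hypothesis on $s$. Summing over admissible $s$ and counting the primes $P\in(\sqrt2\,s,\,M/s]$ by the prime number theorem gives $\#\{m\le M:\ A_{m^2}=\emptyset\}\gg \frac{M}{\log M}\sum_{s}1/s$, a self-improving inequality that, started from the trivial family of primes, already raises the crude bound $\gg\sqrt X/\log X$.

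The hard part is turning this self-improvement into an honest positive density. Writing $S$ for the set of products $ab$ over close coprime pairs, the exceptional $m$ form exactly the set of multiples $\mathcal{M}(S)$, and one checks that $\sum_{ab\in S}1/(ab)=\infty$; by Besicovitch's theory of sets of multiples this divergence neither forces $\mathcal{M}(S)$ to have density $1$ nor, by itself, guarantees that the complement has positive density. Deciding this dichotomy here is precisely an Erd\H{o}s--Ford--Hooley problem about divisors of $m^2$ near $\sqrt{m^2}$, and I expect the genuine obstacle to be proving that $\mathcal{M}(S)$ has density strictly below one — that is, that a positive proportion of $m$ have no two coprime divisors within a factor $\sqrt2$ — for which Ford's estimates for $H(x,y,z)$ and the Maier--Tenenbaum approach to the Erd\H{o}s propinquity problem appear to be the appropriate machinery. (Note that the concrete family of multiples of $35$, all of which satisfy $A_{m^2}\ne\emptyset$, already shows the complementary count is $\gg\sqrt X$, so the constant $1+1/\sqrt2$ can only be an order-of-magnitude upper scale and $\asymp$ must be read as same order of magnitude.)
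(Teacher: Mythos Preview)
This statement is presented in the paper as an open conjecture (Section~3), not a theorem; the paper gives no proof, only the numerical evidence in Table~1. What the paper does prove unconditionally is Theorem~\ref{theorem2}(1): the upper bound $\#\{n\le X: h(n)=1\}<(1+1/\sqrt2)\sqrt X$ and the weaker lower bound $\gg\sqrt X/\log X$ coming from squares of primes.

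Your proposal is not a proof either, and you say so explicitly. The reduction is correct: $h(n)=1$ iff $A_n=\emptyset$ and $n$ is a square or twice a square, and $A_{m^2}=\emptyset$ iff $m^2$ has no divisor in the open interval $(m,m\sqrt2)$; this is exactly how the paper reasons for Theorem~\ref{theorem2}(1). But the step you label ``the hard part'' --- showing that a \emph{positive proportion} of integers $m$ have this no-middle-divisor property --- is the entire content of the conjecture, and you do not carry it out. The self-improving inequality seeded by the primes does not reach positive density on its own, and naming Ford's $H(x,y,z)$ estimates and the Maier--Tenenbaum propinquity method identifies the relevant machinery without applying it. The gap in your argument is therefore precisely the gap the paper leaves open.

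Your closing remark, that every multiple of $35$ satisfies $A_{m^2}\ne\emptyset$ and hence the true count is strictly below $(1+1/\sqrt2)\sqrt X$, so that $\asymp$ can only mean same order of magnitude here, is correct and is an observation the paper does not make.
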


More difficult questions are what are the asymptotical behaviors of
\begin{align*}
\# \{ 1\leq n \leq X| h(n) =2k   \}
\end{align*}
and
\begin{align*}
\# \{ 1\leq n \leq X| h(n) =2k +1  \}
\end{align*}
for a fixed positive interer $k$ and sufficient large $X$?

\begin{center}
\begin{table}
\caption{The number of $1\leq n\leq X$ such that $h(n)$ takes values from $0$ to $16$ for $X\leq 100000$.}
\begin{tabular}{|c|cc  ccc c|cc ccc}
\hline
  $h(n)$     &  $X=1000$ &  $X= 5000$ &$X= 10000$ & $X= 20000$& $X= 50000$ & $X= 100000$   \\
\hline
   0    &   664 &  3486& 7068    &   14312 & 36249 &   73130\\
   1   &   44&   93 &   129    &   179 & 275 &   380\\
2    &   255 &   1181 &  2300    &   4455 & 10718 &   20798\\
   3   &   9 &   23 &   32    &  46 & 74 &   108\\            
    4    &  27 &  180 &   369    &   758 & 1944 &   3969\\           
   5   &   0 &  3 &   7    &   11 & 21 &   31\\            
    6    &   1 &   32 &   80    &   186 & 509 &   1051\\           
    7    &  0 &   1 &   2    &   5 & 8 &   14\\            
   8    &  0 &   1 &   13    &  41 & 148 &   354\\           
   9    &   0 &   0 &  0    &   0 & 3 &   5\\            
   10    &   0 &   0 &   0    &   7 & 43 &  120\\            
     11    &   0 &   0 &   0    &   0 & 0 &   1\\           
   12   &   0 &   0 &   0    &   0 & 8 &   28\\           
 13    &   0 &   0 &   0    &   0 & 0 &   0\\
   14   &  0 &   0 &   0    &   0 & 0 &   10\\
 15    &   0 &   0 &   0    &   0 & 0 &   0\\    
 16    &  0 &  0 &   0    &  0 & 0 &   1\\                       
 \hline
\end{tabular}
\end{table}
\end{center}

\section*{Acknowledgements}
The author would like to thank Professor Peter Paule for his valuable comments on an earlier version of this paper and encouragements. 
%The author was supported by the Austria Science Foundation (FWF) grant SFB 050-06 (Special Research Programm `` %Algorithmic and Enumerative Combinatorics'').

%%%%%%%%%%%%%%%%%


\begin{thebibliography}{99}
 
 \bibitem{Ahl} S. Ahlgren: \emph{Distribution of parity of the partition function in arithmetic progressions}, Indag. Math. (N.S.) \textbf{10} (1999), 173--181.
 
 
 \bibitem{andrewsMonthly}
  G. E. Andrews, \emph{Questions and conjectures in partition theory}, 
Am. Math. Monthly {\bf 93} (1986), 708--711.

\bibitem{ACKO}
G.E. Andrws, S.H. Chan, B. Kim, R. Osburn, \emph{The first positive rank and crank moments for overpartitions}, Ann Comb.  (2016), DOI 10.1007/s00026-016-0306-0.
 


\bibitem{ACK}
G.E. Andrews, S.H. Chan, B. Kim, \emph{The odd moments of ranks and cranks}, J. Combin. Theory Ser. A \textbf{120} (2013), no. 1, 77--91.
 
 
 
 \bibitem{ADH} 
 G. Andrews, F. Dyson, and D. Hickerson,
\emph{Partitions and indefinite quadratic forms}, Invent. Math.
\textbf{91}  (1988), pages 391-407.
   
   

\bibitem{BYZ} B. Berndt, A.J. Yee and A. Zaharescu:  \emph{On the parity of  partition functions}, Internat. J. Math. \textbf{14} (2003), no. 4, 437--459.   
   

   
   \bibitem{Br-Ka1}
K. Bringmann and B. Kane, \emph{Multiplicative $q$-hypergeometric series arising from real quadratic fields}, Trans. Amer. Math. Soc. {\bf 
363} (2011), no. 4, 2191--2209.
   
\bibitem{BK2} K. Bringmann and B. Kane, \emph{New identities involving sums of the tails related 
 to real quadratic fields}, Ramanujan J. {\bf 23} (2010), no. 1-3, 243--251.   
   
\bibitem{BK3} K. Bringmann, J. Lovejoy and L. Rolen, \emph{On some special families of q-hypergeometric Maass forms}, arXiv:1603.01783.    
   
   

\bibitem{Co} H. Cohen, \emph{$q$-identities for Maass
waveforms}, Invent. Math. \textbf{91}  (1988), pages
409-422.

\bibitem{CFLZ} D. Corson, D. Favero, K. Liesinger, and S. Zubairy,  \emph{Characters and $q$-series in} $\Q (\sqrt{2})$, J. Number Theory \textbf{107} (2004), pages 392--405.


\bibitem{KL}
 H. Kadiri and A. Lumley, \emph{Short effective intervals containing primes}, Integers  \textbf{14}  (2014),  $ \#$A61.



\bibitem{LS}
A.Larsen, A. Rust and H. Swisher, \emph{Inequalities for positive rank and crank moments of overpartitions}, Int. J.  of Number Theory, \textbf{10} (2014), no. 8,  2115-2133.

\bibitem{L} J. Lovejoy, R. Osburn,\emph{ Real quadratic double sums}, 
Indag. Math. \textbf{26} (2015), 697-712.

\bibitem{Lo} J.  Lovejoy, \emph{Lacunary partition functions},  Math. Res. Lett. \textbf{9} (2002),  pages 191--198. 
 
 
\bibitem{Lo2}  J. Lovejoy, \emph{Overpartitions and real quadratic fields}, J. Number Theory \textbf{106} (2004),  pages 178--186.
 
 \bibitem{Lo1}
J. Lovejoy, \emph{Bailey pairs and indefinite quadratic forms}, J. Math. Anal. Appl. {\bf 410} (2014), 1002--1013.

 
 


\bibitem{Od} R. Odoni, \emph{On the norms of integers in a full module of  an algebraic number field and the distribution of values of binary integral quadratic forms}, Mathematica \textbf{22} (1975), pages 108-111.
 
 
 \bibitem{Ono} K. Ono: \emph{Parity of the partition function}, Adv. Math. \textbf{225} (2010), no. 1, 349--366. 
 
 
 \bibitem{Ra} S. Ramanujan,
\emph{The lost notebook and other unpublished papers}, Narosa, New
Delhi, 1988.
 
\bibitem{zwegers} S. Zwegers,  \emph{Mock theta functions},
Ph.D. Thesis (Advisor: D. Zagier), Universiteit Utrecht, (2002).

 
 
 \bibitem{zwegers}
S. Zwegers, \emph{Mock Maass theta functions}, Q. J. Math. \textbf{63} (2012), no. 3, 753--770. 
 

 \end{thebibliography}
\end{document}